\documentclass[12pt,reqno]{amsart}
\usepackage{amsmath, amsfonts, amssymb, amsthm, mathtools, fullpage, 
hyperref, xcolor, enumerate}
\usepackage{verbatim}
\usepackage[capitalise]{cleveref}
\usepackage[msc-links, alphabetic]{amsrefs}
\DeclareMathOperator{\supp}{supp}



\newtheorem{thm}{Theorem}[section]

\newtheorem{lemma}[thm]{Lemma}

\newtheorem{question}[thm]{Question}
\theoremstyle{definition}
\newtheorem{remark}[thm]{Remark}

\crefname{thm}{Theorem}{Theorems}
\crefname{lemma}{Lemma}{Lemmas}

\title{Cheeger constants and bipartiteness ratios of Cayley graphons}
\author{Mugdha Mahesh Pokharanakar}
\address{Department of Mathematics, Indian Institute of Science Education and Research Bhopal, 
Bhopal Bypass Road, Bhauri, Bhopal 462066, Madhya Pradesh, India}
\curraddr{}
\email{mugdha22@iiserb.ac.in}
\thanks{}

\subjclass[2010]{05C99}

\keywords{Cayley graphons, Cheeger constant, bipartiteness ratio}

\begin{document}

\begin{abstract}
	We show, under a certain assumption, that 
	the Cheeger constant of a connected Cayley graphon is bounded from above 
	in terms of its bipartiteness ratio, and also in terms of 
	the gap between the top of the spectrum of its Laplacian and $2$. 
	Using this result, we prove that 
	if a connected Cayley graphon $C(G,\gamma)$ is bipartite, 
	then there exists an index two subgroup $H$ of $G$ 
	that avoids the support of $\gamma$. 
\end{abstract}

\maketitle

\section{Introduction} \label{sec:intro}

The discrete Cheeger--Buser inequality, 
established by Dodziuk \cite{Dodziuk84}, Tanner \cite{Tanner-vertexCheegerEasy}, 
Alon--Milman \cite{Alon-Milman85} and Alon \cite{Alon-vertexCheeger86},
states that if the Cheeger constant of a graph is away from $0$,
then the second smallest eigenvalue of its Laplacian is away from $0$, 
and vice versa.
The dual Cheeger--Buser inequality,
established by Trevisan \cite{Trevisan-MaxCut12} and Bauer--Jost \cite{Bauer-Jost13}, 
states that if the bipartiteness ratio of a graph is away from $0$, 
then the largest eigenvalue of its Laplacian is away from $2$, and vice versa.

Breuillard, Green, Guralnick and Tao \cite{BGGT-ExpansionCayley} proved that
the gap between the largest eigenvalue of the Laplacian of 
a connected non-bipartite Cayley graph and $2$ is uniformly bounded 
from below in terms of its Cheeger constant and the degree.
A quantitative version of this result was obtained by Biswas \cite{Biswas19}. 
Moorman, Ralli and Tetali \cite{Moorman-Ralli-Tetali-Bipartiteness} 
used a different approach to improve the bounds involved.
They first proved that the Cheeger constant of a connected non-bipartite Cayley graph
can be bounded from above in terms of its bipartiteness ratio and the degree.
Then they used the dual Cheeger--Buser inequality to improve the bounds.
For a more recent upper bound on the Cheeger constant of 
a connected non-bipartite Cayley graph in terms of 
its bipartiteness ratio and the degree, see \cite{Saha-DualCheeger}.
Li and Liu \cite{Li-Liu-extremal-eigenval25} have obtained an   improved upper bound 
on the Cheeger constant of a connected non-bipartite Cayley graph 
in terms of its degree and the gap between the largest eigenvalue 
of its Laplacian and $2$ using spectral methods.

Borgs, Chayes, Lov\'asz, S\'os, Szegedy and Vesztergombi 
developed the theory of graph limits through a series of articles including
\cites{Lovasz-Szegedy-limitsofDenseGraphs06,Lovasz-etal-Counthomo06, 
Lovasz-etal-cgtseqDenseGraphs08,Lovasz-etal-cgtseqDenseGraphs12}.
They called the ``limit objects'' as graphons.
We refer to the book \cite{Lovasz-LargeNet12} by Lov\'asz for more details.
A \emph{graphon} is a quadruple $(\Omega, \mathcal{B}, \pi, W)$,
where $(\Omega, \mathcal{B}, \pi)$ is a probability space, and 
$W \colon \Omega \times \Omega \to [0,1]$ is a symmetric function
measurable with respect to the completion of the product probability space 
$(\Omega \times \Omega, \mathcal{B} \times \mathcal{B}, \pi \times \pi)$
(see \cite[p.~1599]{Borgs-Chayes-Lovasz-uniqueness-graph-limits}, for instance).
When the underlying probability space is clear from the context, 
we refer to $W$ as a graphon.

The notion of Cayley graphons was introduced by Lov\'asz and Szegedy 
\cite{Lovasz-Szegedy-CayleyGraphons15}.
Let $G$ denote a second countable compact Hausdorff topological group,
$\mathcal{B}$ denote the Borel $\sigma$-algebra on $G$, 
and $\pi$ denote the Haar measure on $(G, \mathcal{B})$.
Let $\gamma \colon G \to [0,1]$ denote a measurable function 
such that for all $x \in G$, we have $\gamma(x) = \gamma(x^{-1})$.
Define a function $W \colon G \times G \to [0,1]$ by $W(x,y) = \gamma(xy^{-1})$
for all $x,y \in G$.
The graphon $(G, \mathcal{B}, \pi, W)$ is called a \emph{Cayley graphon}
\cite[Definition 30]{Lovasz-Szegedy-CayleyGraphons15}, 
and it is denoted by $C(G,\gamma)$.

Khetan and Mj \cite{Abhishek-Mahan24} established 
the Cheeger--Buser inequality for graphons,
and the author \cite{Pokharanakar-DualCheeger26} established 
the dual Cheeger--Buser inequality for graphons. 
In this article, analogous to the work of Moorman, Ralli and Tetali 
\cite{Moorman-Ralli-Tetali-Bipartiteness},
we study a relation between the Cheeger constants 
and the bipartiteness ratios of Cayley graphons.
Then using the dual Cheeger--Buser inequality for graphons,	
we also obtain a relation between the Cheeger constant of a Cayley graphon and 
the gap between the top of the spectrum of its Laplacian and $2$.
We refer to \cref{sec:prelims} for notations, definitions and preliminaries.
We prove the following with the convention that 
the infimum of the empty set is infinity.

\begin{thm} \label{thm:mainthm}
	Let $G$ and $\gamma$ be as above,
	and $W$ denote the Cayley graphon $C(G,\gamma)$.
	Let $h_W$ and $\beta_W$ denote its Cheeger constant 
	and the bipartiteness ratio respectively,
	and $\lambda_W^{\max}$ denote the top of the spectrum of its Laplacian.
	Suppose that $W$ is connected, and that
	\[
		\mathfrak{d}_W \coloneq \inf\{\beta_W(H,H^c) \mid H 
		\text{ is an index two measurable subgroup of } G\} - \beta_W 
	\] 
	is positive. 
	Choose a positive real number $\varepsilon \leq \frac{1}{50}$ such that 
	\[
	\left( 3 \varepsilon + \frac{\varepsilon}{4} \right)
	\left( 2 \varepsilon + 4 \sqrt{\frac{\varepsilon(5 - \varepsilon)(1 + \varepsilon)}
	{2(1 - \varepsilon)}} \right) + 3 \varepsilon \leq \mathfrak{d}_W.
	\]
	Then the inequality $h_W \leq \frac{4}{\varepsilon} \beta_W$ holds. 
	In particular, if the group $G$ does not admit an index two measurable subgroup,
	then the inequality $h_W \leq 200 \beta_W$ holds, and hence, we have
	$2 - \lambda_W^{\max} \geq \frac{h_W^2}{8 \cdot 10^4}$.
\end{thm}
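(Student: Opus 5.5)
Following Moorman--Ralli--Tetali, I will argue by contraposition: assume $h_W > \frac{4}{\varepsilon}\beta_W$ and deduce that some index two measurable subgroup $H$ satisfies $\beta_W(H,H^c) - \beta_W < \mathfrak{d}_W$, contradicting the choice of $\varepsilon$. Here $\beta_W(L,R)$ is the ratio of $2e(L)+2e(R)+e(L\cup R,(L\cup R)^c)$ to $2\,\mathrm{vol}(L\cup R)$, with $d(x)=\int\gamma$ constant, so $\mathrm{vol}(S)=d\,\pi(S)$. Take disjoint measurable $L,R$ with $\beta_W(L,R)\le \beta_W+\eta$ for small $\eta$, and set $S=L\cup R$. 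Since $\beta_W(L,R)$ controls the boundary $e(S,S^c)/\mathrm{vol}(S)$ up to a factor $2$, the hypothesis $h_W>\frac4\varepsilon\beta_W$ forces $\pi(S)>1/2$; indeed the boundary of $S$ must be small relative to $h_W$, which pushes $\pi(S)$ close to $1$. Quantitatively I aim for $\pi(S^c)\lesssim\varepsilon$ after discarding small error terms.

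The key step is to exploit the Cayley structure. Let $f=\mathbf 1_L-\mathbf 1_R$. Then $\beta_W(L,R)$ small means $\iint \gamma(xy^{-1})(f(x)+f(y))^2\,dx\,dy$ is small relative to $d$. Writing this in terms of translates, $\int\gamma(g)\,\|f+ \rho_g f\|_2^2\,dg$ is small, where $\rho_g f(x)=f(xg)$ or its left analogue, according to the convention $W(x,y)=\gamma(xy^{-1})$. Thus for most $g$ in the support of $\gamma$, weighted by $\gamma$, the translate of $f$ by $g$ is close to $-f$ in $L^2$. Define $T=\{g\in G:\|\rho_g f + f\|_2^2\le \delta\}$ and $P=\{g:\|\rho_g f - f\|_2^2\le\delta\}$. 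Since $\|f\|_2^2=\pi(S)$ is close to $1$, the sets $T$ and $P$ are disjoint, and the triangle inequality gives $P\cdot P\subseteq P'$, $T\cdot T\subseteq P'$, and $P\cdot T\subseteq T'$ with $\delta$ doubled. The plan is to show that $H=\{g:\|\rho_gf-f\|_2<\|\rho_gf+f\|_2\}$ is an index two subgroup. The function $\phi(g)=\langle \rho_g f,f\rangle/\pi(S)$ is continuous, and I need every $g$ to have $|\phi(g)|$ close to $1$, i.e.\ $\rho_g f\approx\pm f$. This should follow from connectivity together with the small Cheeger ratio bound: the set of $g$ with $\rho_gf\approx\pm f$ is ``closed under $\gamma$-steps,'' and $h_W$ being large relative to the defect propagates this approximate invariance to all of $G$. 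This is the genuinely hard step, and I expect the square root term $\sqrt{\varepsilon(5-\varepsilon)(1+\varepsilon)/(2(1-\varepsilon))}$ to arise in it, from converting $\langle\rho_gf,f\rangle$ bounds into $L^2$ distances. My first attempt would be an expansion argument: if $A=\{g:|\phi(g)|\ge 1-c\varepsilon\}$ has $\pi(A^c)>0$, then integrating $|\phi(xg)-\phi(x)|$ against $\gamma$ and comparing with $h_W$ yields a contradiction. Continuity of $\phi$ together with $\phi(e)=1$ then makes $H$ a clopen, hence measurable, subgroup of index at most two.

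Having obtained $H$, I will bound $\beta_W(H,H^c)$. Because $f$ is nearly $H$-invariant and nearly $H^c$-anti-invariant, averaging $f$ over $H$ shows that $f$ is $L^2$-close to $\pm(\mathbf 1_H-\mathbf 1_{H^c})$, up to an error of order $2\varepsilon+4\sqrt{\cdots}$. Then the quadratic form $\int\gamma(g)\|\rho_g u+u\|^2$ for $u=\mathbf 1_H-\mathbf 1_{H^c}$ differs from that for $f$ by at most $(3\varepsilon+\varepsilon/4)$ times that error, plus $3\varepsilon$ coming from $\pi(S^c)$ and $\eta$. This gives $\beta_W(H,H^c)-\beta_W<\mathfrak d_W$, a contradiction. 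If $H$ had index one, the same estimate would give $\beta_W$ bounded below by a constant, again contradicting the assumption.

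For the final claim, when no index two subgroup exists the infimum is $+\infty$, so $\mathfrak d_W=\infty$ and $\varepsilon=1/50$ is admissible, giving $h_W\le200\beta_W$. The dual Cheeger--Buser inequality for graphons, $2-\lambda_W^{\max}\ge\beta_W^2/2$, then yields $2-\lambda^{\max}_W\ge h_W^2/(2\cdot 200^2)=h_W^2/(8\cdot10^4)$.
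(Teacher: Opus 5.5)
\textbf{There is a genuine gap at the step you yourself flag as hard.} You need, uniformly in $g$, that the translate of $f=1_L-1_R$ is close to $\pm f$, i.e.\ that $|\phi(g)|$ is close to $1$ for every $g$. The expansion argument you sketch does not deliver this. It works in the translation variable: $\int\gamma(g)\|f+\rho_g f\|_2^2\,\mathrm{d}g\ll d$ gives $\phi(xg)\approx-\phi(x)$ for $\gamma$-typical $g$. Combining this with $h_W$ through a co-area argument can at best show that $|\phi|$ is nearly constant outside a small set, and nothing identifies that constant as being near $1$. The normalisation $\phi(e)=1$ together with continuity only controls a neighbourhood of $e$, which may have tiny Haar measure. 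Averages such as $\int_G\phi=(\pi(L)-\pi(R))^2/\pi(S)$ are equally consistent with $|\phi|$ being small. So no contradiction ``comparing with $h_W$'' is available. Without the uniform dichotomy you get neither the subgroup $H$ nor the bound on $\beta_W(H,H^c)$. The constants in that bound are, in any case, read off from the statement rather than derived.

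\textbf{The missing idea} is to use the Cheeger constant in the \emph{space} variable, separately for each fixed $g$. The agreement set $A_g=(L\cap Lg)\cup(R\cap Rg)$ and the disagreement set $D_g=(L\cap Rg)\cup(R\cap Lg)$ are disjoint. Any edge leaving either of them is a bad edge for $(L,R)$ or for $(Lg,Rg)$, so by translation invariance
\[
\max\{\eta(A_g\times A_g^c),\,\eta(D_g\times D_g^c)\}
\le \eta(L\times R^c)+\eta(R\times L^c)+\eta(Lg\times R^cg)+\eta(Rg\times L^cg)
\le 4d\,\beta_W(L,R).
\]
Hence whichever of $A_g,D_g$ has measure at most $\frac12$ has measure at most $4\beta_W(L,R)/h_W$. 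Up to the convention for $\rho_g$, we have $\pi(S)\phi(g)=\pi(A_g)-\pi(D_g)$ and $\pi(A_g)+\pi(D_g)=\pi(S\cap Sg)$. So this is exactly the uniform statement $|\phi(g)|\approx 1$ you wanted, with a loss linear in $\beta_W(L,R)$, which is what produces the factor $4/\varepsilon$.

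The paper then proceeds as follows. It first disposes directly of the cases $\pi(L\cup R)<1-\varepsilon$ and $\pi(L)\ge\frac{1+\varepsilon}{2}$. In the remaining case, either some $g$ has $\pi(L\cap Lg)\in(\delta\pi(L),(1-\delta)\pi(L))$ with $\delta=\frac{4\varepsilon}{1-\varepsilon}$, or no such $g$ exists. In the first alternative $\min\{\pi(A_g),\pi(D_g)\}\ge\varepsilon$, hence $\varepsilon h_W\le 4\beta_W(L,R)$. In the second, a Freiman-type lemma gives an index two subgroup $H$ with $\pi(L\cap H)$ or $\pi(L\cap H^c)$ below $\frac12\sqrt{(\varepsilon+\delta)\pi(L)}$; this is where the square root comes from. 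Then, for the coset $X$ nearly missing $L$, the bound $\eta(X\times X)\le 2\pi(X\cap L)\int_H\gamma+\eta(L^c\times L^c)$ and its analogue for $X^c$ give $\beta_W(H,H^c)-\beta_W<\mathfrak{d}_W$, a contradiction.

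Once the dichotomy is in hand, your clopen subgroup $\{\phi>0\}$ would be a fine substitute for that lemma. Your treatment of the final clause ($\mathfrak{d}_W=\infty$, $\varepsilon=\frac1{50}$, dual Cheeger--Buser) is correct.
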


In \cref{thm:unbounded-ratios}, we show that, unlike graphs, 
the ratios of the Cheeger constants and the bipartiteness ratios of 
connected non-bipartite Cayley graphons cannot be bounded from above 
solely in terms of their degrees, 
and that the hypothesis in \cref{thm:mainthm} that $\mathfrak{d}_W$ is positive, 
cannot be removed.
It would be interesting to investigate 
if \cref{thm:mainthm} can be improved in the following sense.

\begin{question}
	For a connected Cayley graphon $W$, if $\mathfrak{d}_W$ is positive,
	then does the inequality $h_W \leq C \beta_W$ hold 
	for some positive real number $C$ possibly depending on the degree function of $W$?
\end{question}

Using \cref{thm:mainthm}, we show that 
if a connected Cayley graphon $C(G,\gamma)$ is bipartite, 
then there exists an index two subgroup $H$ of $G$ that avoids the support of $\gamma$.

\begin{thm} \label{thm:connected&bipartite-index2-avoiding-gamma}
	Let $G$ and $\gamma$ be as above, 
	and $W$ denote the Cayley graphon $C(G,\gamma)$.
	If $W$ is connected and bipartite, 
	then there exists an index two measurable subgroup $H$ of $G$
	such that $\pi(\supp(\gamma) \cap H) = 0$,
	where $\supp(\gamma) = \{g \in G \mid \gamma(g) \neq 0\}$.	
\end{thm}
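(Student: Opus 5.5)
We argue by contradiction, using \cref{thm:mainthm}. Suppose $W = C(G,\gamma)$ is connected and bipartite, and that no index two measurable subgroup $H$ of $G$ satisfies $\pi(\supp(\gamma)\cap H)=0$. Bipartiteness means $\beta_W = 0$: there is a partition, or a pair of disjoint sets, essentially carrying no edges inside each part. Connectedness gives $h_W > 0$. This is the natural analogue of the graph fact that a positive Cheeger constant forces a positive bipartiteness ratio away from index two subgroups.

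First I would show $\mathfrak{d}_W > 0$ under the contradiction hypothesis. Since $\beta_W = 0$, this amounts to showing that
\[
\inf\{\beta_W(H,H^c) \mid H \text{ index two measurable subgroup}\} > 0.
\]
For an index two measurable subgroup $H$, the edge density within $H$ and within $H^c = gH$ is governed by $\int_H \gamma \, d\pi$. For $x,y \in H$ or $x,y \in gH$ we have $xy^{-1} \in H$. So the numerator of $\beta_W(H,H^c)$ should be a fixed multiple of $\int_H\gamma\,d\pi$, and the denominator, the total volume, equals $\int_G \gamma\,d\pi$. Each such $\beta_W(H,H^c)$ is then a positive multiple of $\int_H\gamma / \int_G\gamma$, and this is positive because $\pi(\supp(\gamma)\cap H)>0$.

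The main obstacle is that the infimum is over possibly infinitely many index two subgroups, so positivity of each term does not give a positive infimum. Here I would use the structure of index two subgroups. Each one is the kernel of a continuous homomorphism $G \to \{\pm 1\}$; measurable index two subgroups are open, since they have positive measure and so $HH^{-1}$ contains a neighbourhood of the identity. Two distinct such kernels $H_1, H_2$ satisfy the identity
\[
\mathbf 1_{H_1}+\mathbf 1_{H_2}+\mathbf 1_{H_1H_2\text{-complement pattern}}\geq 1,
\]
that is, $\chi_1,\chi_2,\chi_1\chi_2$ cannot all be $-1$ at a point. The characters are orthonormal in $L^2(G)$, and $\int_H \gamma = \frac12(\int\gamma + \langle \gamma, \chi_H\rangle)$. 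Bessel's inequality gives $\langle\gamma,\chi\rangle \to 0$ along any infinite family of distinct characters. Hence $\int_H\gamma \to \frac12\int\gamma > 0$ along any sequence of distinct subgroups. Combined with positivity for each individual $H$, this shows the infimum is positive, so $\mathfrak{d}_W>0$. (If there are no index two subgroups at all, the infimum is $\infty$ and $\mathfrak{d}_W>0$ trivially.)

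Now choose $\varepsilon\le \frac1{50}$ small enough to satisfy the displayed inequality in \cref{thm:mainthm}. Such an $\varepsilon$ exists because the left side tends to $0$ as $\varepsilon\to0$. \cref{thm:mainthm} then gives $h_W\le \frac4\varepsilon\beta_W = 0$, contradicting $h_W>0$ from connectedness. Therefore some index two measurable subgroup $H$ has $\pi(\supp(\gamma)\cap H)=0$.
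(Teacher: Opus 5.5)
Your argument is correct, but it rules out infinitely many index two subgroups with small $\beta_W(H,H^c)$ differently from the paper.

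The paper argues forward. Since $\beta_W=0$ and $h_W>0$, \cref{thm:mainthm} forces $\mathfrak{d}_W\le 0$, which gives index two subgroups $H_n$ with $\beta_W(H_n,H_n^c)\to 0$. If infinitely many of these are distinct, the sets $\mathcal{H}_k=(H_k\cap H_{k+1})\cup(H_k^c\cap H_{k+1}^c)$ have measure $\frac12$ and satisfy $\eta(\mathcal{H}_k\times\mathcal{H}_k^c)\le d\bigl(\beta_W(H_k,H_k^c)+\beta_W(H_{k+1},H_{k+1}^c)\bigr)\to 0$. This contradicts $h_W>0$ a second time, so some $H$ recurs and has $\beta_W(H,H^c)=0$.

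You instead prove $\mathfrak{d}_W>0$ directly under the contrary hypothesis. You use Bessel's inequality for the orthonormal family $\chi_H=1_H-1_{H^c}$ together with $\int_H\gamma=\frac12(d+\langle\gamma,\chi_H\rangle)$, and then a single application of \cref{thm:mainthm} finishes.

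The two arguments rest on the same fact. $\mathcal{H}_k$ is the kernel of $\chi_{H_k}\chi_{H_{k+1}}$, and its having measure $\frac12$ is exactly the orthogonality you use. The paper's route stays combinatorial and spends connectedness on the finiteness step. Yours needs only $d>0$ there and is quantitative: since $\|\gamma\|_2^2\le d$, at most $4/d$ index two subgroups satisfy $\beta_W(H,H^c)\le\frac14$. Hence the infimum in $\mathfrak{d}_W$ is attained whenever it is below $\frac12$.

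Two cosmetic points. The displayed inequality with the ``$H_1H_2$-complement pattern'' is not meaningful as written and plays no role. Openness and continuity of the characters are also unnecessary. Orthogonality follows because, for $H_1\neq H_2$, the set $(H_1\cap H_2)\cup(H_1^c\cap H_2^c)$ is a proper index two measurable subgroup, hence has measure $\frac12$, so $\int_G\chi_{H_1}\chi_{H_2}\,\mathrm{d}\pi=0$.
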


The proofs of \cref{thm:mainthm,thm:connected&bipartite-index2-avoiding-gamma}
are given in \cref{sec:proofs}.
To prove \cref{thm:mainthm}, we consider any disjoint measurable subsets $L, R$ of $G$
which ``approximate'' the bipartiteness ratio $\beta_W$ via $\beta_W(L,R)$, 
and try to find an upper bound on 
the ratio of the Cheeger constant $h_W$ and $\beta_W(L,R)$.
We get a desired upper bound in all the cases, 
except one where a certain dichotomy holds 
for the measures of the intersections of $L$ with its translates.
In this case, we use \cref{lem:index2subgp}, 
which is proved using a technique of Fre\u{\i}man \cite{Freiman-subgroup-technique-73},
and obtain an index two measurable subgroup of $G$ with certain properties, 
which contradict the assumptions in \cref{thm:mainthm}, 
showing that this case does not arise.
This proof is motivated by the arguments in \cite{Moorman-Ralli-Tetali-Bipartiteness}.
For proving \cref{thm:connected&bipartite-index2-avoiding-gamma},
using \cref{thm:mainthm}, we get a sequence $(H_n)$ of index two measurable subgroups 
of $G$ such that the sequence $(\beta_W(H_n,H_n^c))$ converges to zero. 
If the sequence $(H_n)$ has infinitely many distinct terms, then 
the sequence of complements of the symmetric differences of distinct subgroups in $(H_n)$
leads us to conclude that the Cheeger constant of $W$ is zero, 
which contradicts the connectedness of $W$, and the result follows.

\section{Preliminaries} \label{sec:prelims}

Let $(\Omega,\mathcal{B},\pi, W)$ be a graphon. 
We will write $\mathrm{d}x$ to denote $\mathrm{d}\pi(x)$.
The \emph{degree function} $d_W \colon \Omega \to [0,1]$ of $W$ is defined as
\[
    d_W(x) = \int_\Omega W(x,y)\, \mathrm{d}y
\] 
for all $x \in \Omega$. 
If the degree function of $W$ is equal to a constant $d \in [0,1]$ a.e.,
the graphon $W$ is said to be \emph{$d$-regular}.
Define $\nu \colon \mathcal{B} \to [0,1]$ by 
\[
    \nu(A) = \int_A d_W(x)\, \mathrm{d}x
\]
for any $A \in \mathcal{B}$. 
Note that $\nu$ is a measure on $(\Omega,\mathcal{B})$.
Consider the corresponding real Hilbert space $L^2(\Omega,\nu)$ with the inner product 
\[
	\langle f,g \rangle_v = \int_{\Omega} \int_{\Omega} f(x)g(x)W(x,y)\, \mathrm{d}x\, \mathrm{d}y
\]
for all $f,g \in L^2(\Omega,\nu)$.

A graphon $W$ is said to be \emph{connected} if 
for any measurable subset $A$ of $\Omega$ with $0 < \pi(A) < 1$, 
the integral $\int_{A \times A^c} W(x,y)\, \mathrm{d}x\, \mathrm{d}y$ is positive.
If $W$ is connected, then $d_W$ is positive $\pi$-a.e.
Henceforth, assume that $W$ is a connected graphon.
Consider the bounded linear operator $T_W \colon L^2(\Omega,\nu) \to L^2(\Omega,\nu)$, defined by
\[
	(T_W f)(x) = \int_{\Omega} W(x,y)f(y)\, \mathrm{d}y
\] 
for all $f \in L^2(\Omega,\nu)$.
The \emph{Laplacian} $\Delta_W$ of the graphon $W$ is given by
\[
	(\Delta_W f)(x) = f(x) - \frac{1}{d_W(x)} (T_W f)(x)
\] 
for all $f \in L^2(\Omega,\nu)$. 
The \emph{top of the spectrum} $\lambda_W^{\max}$ of $\Delta_W$ is defined
(for instance, see \cite{Pokharanakar-DualCheeger26}) as
\[
	\lambda_W^{\max} 
	= \sup_{f \in L^2(\Omega,\nu) \setminus \{0\}} 
	\frac{\langle \Delta_W f,f\rangle_v}{\langle f,f\rangle_v}.
\] 
For any subsets $A,B \in \mathcal{B}$, write 
\[
	\eta(A \times B) = \int_{A} \int_{B} W(x,y)\, \mathrm{d}x\, \mathrm{d}y.
\]
The \emph{Cheeger constant} of a connected graphon $W$, denoted by $h_W$,
is defined (see \cite{Abhishek-Mahan24}, for instance) by
\[
	h_W = \inf_{\substack{A \in \mathcal{B} \\ 0 < \pi(A) < 1}}
	\frac{\eta(A \times A^c)}{\min\{\nu(A), \nu(A^c)\}}.
\]
Observe that                                            
\[
	h_W = \inf_{\substack{A \in \mathcal{B} \\ 0 < \nu(A) \leq \frac{\nu(\Omega)}{2}}}
	\frac{\eta(A \times A^c)}{\nu(A)}.
\] 
A graphon $W$ is said to be \emph{bipartite} if there exist 
disjoint measurable subsets $L, R$ of $\Omega$ such that $L \cup R = \Omega$, 
and that $W$ is $\pi \times \pi$-almost everywhere zero 
on $L \times L$ and $R \times R$.
Analogous to the bipartiteness ratio of a graph, 
defined by Trevisan \cite{Trevisan-MaxCut12}, 
the \emph{bipartiteness ratio} $\beta_W$ of $W$ is defined as
\[
	\beta_W = \inf_{\substack{L,R \in \mathcal{B} \\ 
	\pi(L \cup R) > 0 \\ L \cap R = \emptyset}}
	\beta_W(L,R),
\]	
where
\[
	\beta_W(L,R) = \frac{2 \eta(L \times L) + 2 \eta(R \times R) 
	+ \eta((L \cup R) \times (L \cup R)^c)}
	{2 \nu(L \cup R)}
\]
for any disjoint measurable subsets $L,R$ of $\Omega$ such that $\pi(L \cup R) > 0$.
The dual Cheeger--Buser inequality for graphons, 
obtained by arguing similarly as in the proof of 
\cite[Theorem 1.1]{Pokharanakar-DualCheeger26}, states that 
\begin{equation} \label{ineq:dualCheeger}
	\frac{\beta_W^2}{2} \leq 2 - \lambda_W^{\max} \leq 2 \beta_W.
\end{equation}

\section{Proofs of \cref{thm:mainthm,thm:connected&bipartite-index2-avoiding-gamma}} 
\label{sec:proofs}

Henceforth, $G$ denotes a second countable compact Hausdorff topological group,
and $\mathcal{B}$ denotes the Borel $\sigma$-algebra on $G$. 
Let $\pi$ denote the Haar measure on $(G, \mathcal{B})$, and
$\gamma \colon G \to [0,1]$ denote a measurable function 
such that for all $x \in G$, we have $\gamma(x) = \gamma(x^{-1})$.
It follows from the translation invariance of $\pi$ that
the Cayley graphon $C(G,\gamma)$ is $d$-regular, 
where $d = \int_{G} \gamma(y)\, \mathrm{d}y$.
The proofs in this section are motivated by the arguments in 
\cite{Moorman-Ralli-Tetali-Bipartiteness} and \cite{Saha-DualCheeger}.

If $L$ is a measurable subset of $G$ with $\pi(L)$ ``around'' $1/2$,
and if the intersection of $L$ with any of its translates is 
either ``very small'' or ``very large'', 
then the following lemma states that there exists a subgroup $H$ of $G$  
such that $L$ is ``almost like'' $H$ or $H^c$.
It is proved using arguments similar to that in \cite{Freiman-subgroup-technique-73}.

\begin{lemma} \label{lem:index2subgp}
	Let $\varepsilon, \delta$ be any positive real numbers such that 
	$\delta < \frac{1 - 3 \varepsilon}{4}$.
	Let $L$ be a measurable subset of $G$ such that 
	$\frac{1 - \varepsilon}{2} \leq \pi(L) < \frac{1 + \varepsilon}{2}$.
	Suppose that we have
	\[
		\pi(L \cap Lg) \notin (\delta \pi(L), (1 - \delta) \pi(L))
	\]
	for every $g \in G$.
	Then the group $G$ admits an index two measurable subgroup $H$ such that
	\[
		\pi(L \cap H) < \frac{1}{2} \sqrt{(\varepsilon + \delta) \pi(L)},
		\quad \text{or} \quad
		\pi(L \cap H^c) < \frac{1}{2} \sqrt{(\varepsilon + \delta) \pi(L)}.	
	\]
\end{lemma}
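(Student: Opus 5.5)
Following Fre\u{\i}man's technique, I would define
\[
	H = \{g \in G \mid \pi(L \cap Lg) \geq (1-\delta)\pi(L)\},
\]
the set of translates under which $L$ is ``almost invariant''. By the hypothesis, every $g \notin H$ satisfies $\pi(L \cap Lg) \leq \delta\pi(L)$. The function $g \mapsto \pi(L \cap Lg) = \int 1_L(x)1_L(xg^{-1})\,\mathrm{d}x$ is continuous, since translation is continuous in $L^1(G)$. Hence $H$ is closed and therefore measurable. It is also open, since the value set misses the interval $(\delta\pi(L),(1-\delta)\pi(L))$.

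\textbf{$H$ is a subgroup.} We have $e \in H$, and $H$ is closed under inverses because $\pi(L\cap Lg^{-1}) = \pi(Lg \cap L)$ by right invariance of Haar measure (compact groups are unimodular). For closure under products, write $\pi(L \setminus Lgh) \le \pi(L\setminus Lh) + \pi(Lh \setminus Lgh) = \pi(L\setminus Lh)+\pi(L\setminus Lg) \le 2\delta\pi(L)$. So $\pi(L\cap Lgh) \ge (1-2\delta)\pi(L) > \delta\pi(L)$, using $\delta<1/3$ (which follows from $\delta<(1-3\varepsilon)/4$). The dichotomy then forces $gh\in H$.

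\textbf{The index is two.} First, $H \ne G$. Averaging over $g$ gives $\int_G \pi(L\cap Lg)\,\mathrm{d}g = \pi(L)^2$. If $H=G$ we would get $\pi(L)^2 \ge (1-\delta)\pi(L)$, i.e.\ $\pi(L)\ge 1-\delta$. This contradicts $\pi(L) < \frac{1+\varepsilon}{2}$, because $\delta < \frac{1-3\varepsilon}{4}$ gives $1-\delta > \frac{3+3\varepsilon}{4} > \frac{1+\varepsilon}{2}$.

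For the index, split the average:
\[
	\pi(L)^2 \le \pi(H)\pi(L) + \delta\pi(L)(1-\pi(H)),
\]
so $\pi(H) \ge \frac{\pi(L)-\delta}{1-\delta}$. Similarly, $\pi(L)^2 \ge (1-\delta)\pi(L)\pi(H)$ gives $\pi(H)\le \frac{\pi(L)}{1-\delta}$. Since $H$ is open of finite index $n$, we have $\pi(H)=1/n$. Using $\pi(L)\ge\frac{1-\varepsilon}{2}$, the lower bound is greater than $1/3$ exactly when $3\pi(L)-3\delta > 1-\delta$, which needs $\frac{3-3\varepsilon}{2} - 1 > 2\delta$, i.e.\ $\delta < \frac{1-3\varepsilon}{4}$. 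That is precisely the hypothesis, which is reassuring. Hence $n \le 2$, and since $H\neq G$, $n=2$.

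\textbf{$L$ is close to $H$ or $H^c$.} Let $a=\pi(L\cap H)$ and $b=\pi(L\cap H^c)$. For $g\in H$, the sets $L\cap H$ and $L\cap H^c$ are each invariant under right translation by $g$, so
\[
	\int_H \pi(L\cap Lg)\,\mathrm{d}g = \int_H \bigl(\pi(L\cap H\cap (L\cap H)g)+\pi(L\cap H^c\cap (L\cap H^c)g)\bigr)\,\mathrm{d}g = 2(a^2+b^2),
\]
because each coset has measure $1/2$ and the convolution integrates to $2a^2$ (respectively $2b^2$). Similarly, $\int_{H^c}\pi(L\cap Lg)\,\mathrm{d}g = 4ab$. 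On $H^c$ the integrand is at most $\delta\pi(L)$, so $4ab \le \frac{\delta}{2}\pi(L)$, giving $ab \le \frac{\delta\pi(L)}{8}$.

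From $a+b=\pi(L)$ we get $\min(a,b)^2 \le ab$, so $\min(a,b)\le \sqrt{\delta\pi(L)/8}$. This is less than $\frac12\sqrt{(\varepsilon+\delta)\pi(L)}$, since $\delta/8 < (\varepsilon+\delta)/4$ and $\varepsilon>0$ makes the inequality strict. I expect the bookkeeping in this last step to be where care is needed, but it appears to give the claim with room to spare, so the step in most need of checking is the index-two counting.
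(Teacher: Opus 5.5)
Your construction of $H$, the proof that it is a subgroup, and the averaging argument giving $H\neq G$ and $\pi(H)>\frac13$ are the same as in the paper. Your use of continuity of $g\mapsto\pi(L\cap Lg)$ to make $H$ clopen, hence of finite index with $\pi(H)=1/n$, is a valid alternative to the paper's route. The paper gets measurability from Fubini--Tonelli and then notes that all cosets of $H$ have equal measure. The index-two counting you were worried about is fine.

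The error is in the last step: the two integrals you state are each off by a factor of two. They sum to $2(a+b)^2=2\pi(L)^2$, which contradicts the identity $\int_G\pi(L\cap Lg)\,\mathrm{d}g=\pi(L)^2$ you proved earlier. The correct values are
\[
\int_H\pi(L\cap Lg)\,\mathrm{d}g=a^2+b^2,\qquad \int_{H^c}\pi(L\cap Lg)\,\mathrm{d}g=2ab.
\]
To see this, put $A=L\cap H$. Then $A\cap Ag=\emptyset$ for every $g\in H^c$, so $\int_H\pi(A\cap Ag)\,\mathrm{d}g=\int_G\pi(A\cap Ag)\,\mathrm{d}g=a^2$. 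The fact that $\pi(H)=\frac12$ plays no role here. Also, it is $H$ and $H^c$ that are invariant under right multiplication by $g\in H$, not $L\cap H$ and $L\cap H^c$; your decomposition of $L\cap Lg$ is nevertheless correct.

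With the correct value, the bound $\pi(L\cap Lg)\le\delta\pi(L)$ on $H^c$ gives $2ab\le\frac{\delta}{2}\pi(L)$. Hence $ab\le\frac{\delta}{4}\pi(L)$, and
\[
\min(a,b)\le\tfrac12\sqrt{\delta\pi(L)}<\tfrac12\sqrt{(\varepsilon+\delta)\pi(L)}.
\]
So the conclusion survives, but not with room to spare: the strict inequality now comes only from $\varepsilon>0$.

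The paper instead integrates over $H$. It uses $\pi(L\cap Lg)\ge(1-\delta)\pi(L)$ there, together with $\pi(L)<\frac{1+\varepsilon}{2}$, to get $ab<\frac{\varepsilon+\delta}{4}\pi(L)$; this is where the $\varepsilon$ in the statement comes from. Your corrected route via $H^c$ is slightly sharper and does not need the upper bound on $\pi(L)$ at this step.
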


\begin{proof}
	Let $H \coloneq \{g \in G \mid \pi(L \cap Lg) \geq (1 - \delta) \pi(L)\}$.
	Consequently, we have $H^c = \{g \in G \mid \pi(L \cap Lg) \leq \delta \pi(L)\}$.
	Note that the identity element of the group $G$ belongs to $H$.
	Let $h_1, h_2$ be any elements of $H$.
	Using translation invariance of $\pi$, and the facts 
	$L \cap Lh_1h_2 = L \setminus (L \setminus Lh_1h_2)$
	and $L \setminus Lh_1h_2 \subseteq (L \setminus Lh_2) \cup (Lh_2 \setminus Lh_1h_2)$,
	we obtain
	\begin{align*}
		\pi(L \cap Lh_1h_2) 
		& =
		\pi(L) - \pi(L \setminus Lh_1h_2)
		\\
		& \geq
		\pi(L) - \pi(L \setminus Lh_2) - \pi(Lh_2 \setminus Lh_1h_2)
		\\
		& =
		\pi(L) - \pi(L \setminus Lh_2) - \pi(L \setminus Lh_1)
		\\
		& =
		\pi(L) - (\pi(L) - \pi(L \cap Lh_2)) - (\pi(L) - \pi(L \cap Lh_1))
		\\
		& =
		\pi(L \cap Lh_1) + \pi(L \cap Lh_2) - \pi(L)
		\\
		& \geq 
		(1 - 2 \delta) \pi(L).
	\end{align*}
	As $\delta < \frac{1 - 3 \varepsilon}{4} < \frac{1}{3}$, 
	we conclude that $\pi(L \cap Lh_1h_2) > \delta \pi(L)$, 
	and hence, the set $H$ contains $h_1h_2$.
	Also, for any $h \in H$, note that 
	$\pi(L \cap Lh^{-1}) = \pi(Lh \cap L) \geq (1 - \delta) \pi(L)$
	holds, and thus, $h^{-1}$ lies in $H$.
	This proves that $H$ is a subgroup of $G$.
	
	Further, as the function from $G \times G$ to $\mathbb{R}$, defined by 
	\[
		(g,x) \mapsto 1_{L \cap Lg}(x) = 1_L(x) 1_L(xg^{-1}),
	\] 
	is measurable, the function 
	\[
		g \mapsto \pi(L \cap Lg) = \int_G 1_{L \cap Lg}(x)\, \mathrm{d}x
	\]	 
	from $G$ to $\mathbb{R}$ is also measurable, using the Fubini--Tonelli theorem.
	Therefore, the subgroup $H$ of $G$ is measurable.
	
	Now we will show that $H$ has index two in $G$. 
	It is enough to prove that $\frac{1}{3} < \pi(H) < 1$, 
	as all cosets of $H$ in $G$ are of the same $\pi$-measure.
	Observe that
	\begin{align*}
		\int_{G} \pi(L \cap Lg)\, \mathrm{d}g
		& =
		\int_{G} \left( \int_{G} 1_L(x) 1_L(xg^{-1})\, \mathrm{d}x \right) \mathrm{d}g
		\\
		& =
		\int_{G} \left( \int_{G} 1_L(xg^{-1})\, \mathrm{d}g \right) 1_L(x)\, \mathrm{d}x
		\\
		& =
		\int_{G} \left( \int_{G} 1_L(g)\, \mathrm{d}g \right) 1_L(x)\, \mathrm{d}x
		\\
		& =
		\pi(L)^2,
	\end{align*}
	and that
	\[
	\int_{G} \pi(L \cap Lg)\, \mathrm{d}g 
	= \int_{H} \pi(L \cap Lg)\, \mathrm{d}g + \int_{H^c} \pi(L \cap Lg)\, \mathrm{d}g
	\leq \pi(L) \pi(H) + \delta \pi(L) \pi(H^c).
	\] 
	It follows that $\pi(L) \leq (1 - \delta) \pi(H) + \delta$, which implies
	\[
	\pi(H) \geq \frac{\pi(L) - \delta}{1 - \delta} 
	\geq \frac{\frac{1 - \varepsilon}{2} - \delta}{1 - \delta} > \frac{1}{3}, 
	\] 
	where the last inequality follows from the hypothesis that 
	$\delta < \frac{1 - 3 \varepsilon}{4}$.
	If $\pi(H) = 1$, then we have
	\[
		\pi(L)^2 = \int_{G} \pi(L \cap Lg)\, \mathrm{d}g 
		= \int_{H} \pi(L \cap Lg)\, \mathrm{d}g
		\geq (1 - \delta) \pi(L).
        \]
	This implies that
	\[
		\pi(L) \geq 1 - \delta > 1 - \frac{1 - 3 \varepsilon}{4}
		> \frac{1 + \varepsilon}{2},
	\]
	which is a contradiction. 
	So, we conclude that $\frac{1}{3} < \pi(H) < 1$, and consequently, 
	the index of $H$ in $G$ is two.

	Using the facts that $HH = H^c H^c = H$ and $HH^c = H^c H = H^c$, 
	and the translation invariance of $\pi$, we obtain
	\begin{align*}
		\int_{H} \pi(L \cap Lg)\, \mathrm{d}g
		& =
		\int_{H} \left( \int_{L} 1_L(xg^{-1})\, \mathrm{d}x \right) \mathrm{d}g
		\\
		& =
		\int_{L} \left( \int_{H} 1_L(xg^{-1})\, \mathrm{d}g \right) \mathrm{d}x
		\\
		& =
		\int_{L \cap H} 
		\left( \int_{H} 1_{L \cap H}(xg^{-1})\, \mathrm{d}g \right) \mathrm{d}x
		+ \int_{L \cap H} 
		\left( \int_{H} 1_{L \cap H^c}(xg^{-1})\, \mathrm{d}g \right) \mathrm{d}x
		\\
		& \hspace{0.46cm} +
		\int_{L \cap H^c} 
		\left( \int_{H} 1_{L \cap H}(xg^{-1})\, \mathrm{d}g \right) \mathrm{d}x
		+ \int_{L \cap H^c} 
		\left( \int_{H} 1_{L \cap H^c}(xg^{-1})\, \mathrm{d}g \right) \mathrm{d}x
		\\
		& =
		\int_{L \cap H} 
		\left( \int_{H} 1_{L \cap H}(xg^{-1})\, \mathrm{d}g \right) \mathrm{d}x
		+ \int_{L \cap H^c} 
		\left( \int_{H} 1_{L \cap H^c}(xg^{-1})\, \mathrm{d}g \right) \mathrm{d}x
		\\
		& =
		\int_{L \cap H} 
		\left( \int_{G} 1_{L \cap H}(xg^{-1})\, \mathrm{d}g \right) \mathrm{d}x
		+ \int_{L \cap H^c} 
		\left( \int_{G} 1_{L \cap H^c}(xg^{-1})\, \mathrm{d}g \right) \mathrm{d}x
		\\
		& =
		\pi(L \cap H)^2 + \pi(L \cap H^c)^2
		\\
		& =
		\left( \pi(L \cap H) + \pi(L \cap H^c) \right)^2 
		- 2 \pi(L \cap H) \pi(L \cap H^c)
		\\
		& =
		\pi(L)^2 - 2 \pi(L \cap H) \pi(L \cap H^c)
		\\
		& <
		\frac{1 + \varepsilon}{2} \pi(L) - 2 \pi(L \cap H) \pi(L \cap H^c).
	\end{align*}
	Combining the above with the inequality
	\[
		\int_{H} \pi(L \cap Lg)\, \mathrm{d}g \geq (1 - \delta) \pi(L) \pi(H)
		= \frac{1 - \delta}{2} \pi(L),	
	\]
	we have
	\[
		\pi(L \cap H) \pi(L \cap H^c) < \frac{\varepsilon + \delta}{4} \pi(L),
	\]
	and as a consequence, the desired result follows.
\end{proof}

Next, we prove \cref{thm:mainthm} using \cref{lem:index2subgp}.

\begin{proof}[Proof of \cref{thm:mainthm}]
	If $\beta_W \geq \frac{\varepsilon}{4}$, 
	then we are done using the fact that $h_W \leq 1$. 
	Henceforth, assume that $\beta_W < \frac{\varepsilon}{4}$.
	Let $\epsilon$ be any positive real number with $\epsilon \leq \varepsilon$.	
	Choose disjoint measurable subsets $L, R$ of $G$ such that 
	$\pi(L \cup R) > 0$ and $\beta_W(L,R) - \beta_W < \epsilon$.
	It suffices to show that $h_W \leq \frac{4}{\varepsilon} \beta_W(L,R)$.
	
	First, suppose that $\pi(L \cup R) < 1 - \varepsilon$.
	If $\pi(L \cup R) \leq \frac{1}{2}$, then note that
	\[
		h_W \leq \frac{\eta((L \cup R) \times (L \cup R)^c)}{\nu(L \cup R)}
		\leq 2 \beta_W(L,R).
	\] 
	Otherwise, we have $\pi((L \cup R)^c) < \frac{1}{2}$, and thus, we get
	\begin{align*}
		h_W 
		& \leq
		\frac{\eta((L \cup R) \times (L \cup R)^c)}{\nu((L \cup R)^c)}
		\\
		& \leq
		\frac{2 \beta_W(L,R) \nu(L \cup R)}{\nu((L \cup R)^c)}
		\\
		& <
	    \frac{2}{\varepsilon} \beta_W(L,R).
	\end{align*}

	Now suppose that $\pi(L \cup R) \geq 1 - \varepsilon$. 
	We assume that $\pi(L) \geq \pi(R)$ (by interchanging $L$ and $R$, if required). 
	Then we have $\pi(L) \geq \frac{1 - \varepsilon}{2}$.

	Let us consider the case that $\pi(L) \geq \frac{1 + \varepsilon}{2}$.
	We get $\pi(R) \leq 1 - \pi(L) \leq \frac{1 - \varepsilon}{2}$.
	This implies that
	\begin{align*}
		\eta(L \times R^c)
		& \geq
		\eta(L \times G)  - \eta(G \times R)
		\\
		& \geq
		d (\pi(L) - \pi(R))
		\\
		& \geq
		d \varepsilon
		\\
		& \geq
		d \varepsilon h_W.
	\end{align*}
	Also, we have
	\[
		\eta(L \times R^c) 
		\leq \eta(L \times L) + \eta((L \cup R) \times (L \cup R)^c)
		\leq 2d \beta_W(L, R),
	\] 
	and hence, it follows that $h_W \leq \frac{2}{\varepsilon} \beta_W(L,R)$.

	Now it remains to consider the case that 
	$\frac{1 - \varepsilon}{2} \leq \pi(L) < \frac{1 + \varepsilon}{2}$.
	Let $g \in G$ be arbitrary.
	Following the proof of \cite[Theorem 3.1]{Saha-DualCheeger}, it follows that 
	\begin{align*}
		& \hspace{0.56cm} 
		((L \cap Lg) \cup (R \cap Rg)) \times ((L \cap Lg) \cup (R \cap Rg))^c
		\\
		& \subseteq
		(L \times R^c) \cup (R \times L^c) 
		\cup (Lg \times (Rg)^c) \cup (Rg \times (Lg)^c)
		\\
		& =
		(L \times R^c) \cup (R \times L^c) 
		\cup (Lg \times R^c g) \cup (Rg \times L^c g),
	\end{align*}
	and similarly,
	\begin{align*}
		& \hspace{0.56cm} 
		((L \cap Rg) \cup (R \cap Lg)) \times ((L \cap Rg) \cup (R \cap Lg))^c
		\\
		& \subseteq
		(L \times R^c) \cup (R \times L^c) 
		\cup (Rg \times L^c g) \cup (Lg \times R^c g).
	\end{align*}
	Then the translation invariance of $\pi$ implies
	\begin{align*}
		& \hspace{0.56cm}
		\eta(((L \cap Lg) \cup (R \cap Rg)) \times ((L \cap Lg) \cup (R \cap Rg))^c)
		\\
		& \leq
		\eta(L \times R^c) + \eta(R \times L^c) 
		+ \eta(Lg \times R^c g) + \eta(Rg \times L^c g)
		\\
		& =
		2 \eta(L \times L) + 2 \eta(L \times (L \cup R)^c)
		+ 2 \eta(R \times R) + 2 \eta(R \times (L \cup R)^c)
		\\
		& \leq
		4d \beta_W(L,R) \pi(L \cup R)
		\\
		& \leq
		4d \beta_W(L,R).	
	\end{align*}	
	Similarly, we have
	\[
		\eta(((L \cap Rg) \cup (R \cap Lg)) \times ((L \cap Rg) \cup (R \cap Lg))^c)
		\leq 4d \beta_W(L,R).
	\]
	Note that the sets 
	$(L \cap Lg) \cup (R \cap Rg)$ and $(L \cap Rg) \cup (R \cap Lg)$ are disjoint, 
	and therefore, one of them has $\pi$-measure at most $\frac{1}{2}$.
	It follows that 
	\begin{equation} \label{ineq:minpih-bdd}
		\min\{\pi((L \cap Lg) \cup (R \cap Rg)), \pi((L \cap Rg) \cup (R \cap Lg))\} h_W
		\leq 4 \beta_W(L,R).
	\end{equation}
	
	Now we bound from below the left-hand-side of the above inequality.
	Let $\delta = \frac{4 \varepsilon}{1 - \varepsilon}$. 
	Using the fact that $\varepsilon \leq \frac{1}{20}$, 
	we have $\delta < \frac{1 - 3 \varepsilon}{4}$.
	We claim that there is an element $g$ of $G$ such that 
	$\pi(L \cap Lg) \in (\delta \pi(L), (1 - \delta) \pi(L))$.
	Suppose this is not the case. 
	Then by \cref{lem:index2subgp}, 
	we get an index two measurable subgroup $H$ of $G$ such that
	\[
		\pi(L \cap H) < \frac{1}{2} \sqrt{(\varepsilon + \delta) \pi(L)},
		\quad \text{or} \quad
		\pi(L \cap H^c) < \frac{1}{2} \sqrt{(\varepsilon + \delta) \pi(L)}.	
	\] 
	If $\pi(L \cap H) < \frac{1}{2} \sqrt{(\varepsilon + \delta) \pi(L)}$,
	then denote $H$ by $X$. Otherwise, denote $H^c$ by $X$. 
	Observe that
	\[
		\pi(L \cap X) 
		< \frac{1}{2} \sqrt{\frac{(\varepsilon + \delta)(1 + \varepsilon)}{2}},
		\quad \text{and} \quad
		\pi(L \cap X^c) 
		> \pi(L) - \frac{1}{2} \sqrt{\frac{(\varepsilon + \delta)(1 + \varepsilon)}{2}},
	\]
	and that
	\[
		\pi(L^c \cap X^c) = \frac{1}{2} - \pi(L \cap X^c)
		< \frac{\varepsilon}{2} 
		+ \frac{1}{2} \sqrt{\frac{(\varepsilon + \delta)(1 + \varepsilon)}{2}}.
	\]
	Using that $XX^{-1} = X^c (X^c)^{-1} = H$ and $\pi(X) = \pi(X^c) = \frac{1}{2}$, 
	we obtain
	\[
		\eta(X \times X) = \eta(X^c \times X^c) = \frac{1}{2}\int_{H} \gamma(y)\, \mathrm{d}y, 
	\]
	and hence, $\beta_W(H,H^c) = \frac{1}{d} \int_{H} \gamma(y)\, \mathrm{d}y$.
	Also, we have
	\begin{align*}
		\eta(X \times X) 
		& =
		\eta((X \cap L) \times X) + \eta((X \cap L^c) \times (X \cap L)) 
		+ \eta((X \cap L^c) \times (X \cap L^c))
		\\
		& \leq
		2 \eta((X \cap L) \times X) + \eta((X \cap L^c) \times (X \cap L^c)) 
		\\
		& \leq 
		2 \int_{G} 
		\left( \int_{G} \gamma(yx^{-1}) 1_{X}(y)\, \mathrm{d}y \right) 
		1_{X \cap L}(x)\, \mathrm{d}x + \eta(L^c \times L^c)
		\\
		& =
		2 \int_{G} 
		\left( \int_{G} \gamma(y) 1_{X}(yx)\, \mathrm{d}y \right) 
		1_{X \cap L}(x)\, \mathrm{d}x + \eta(L^c \times L^c)
        \\
		& =
		2 \pi(X \cap L) \int_{H} \gamma(y)\, \mathrm{d}y + \eta(L^c \times L^c),
		\tag{using $HX = X$ and $H^c X = X^c$}
	\end{align*}
	and similarly,
	\[
		\eta(X^c \times X^c) 
		\leq 2 \pi(X^c \cap L^c) \int_{H} \gamma(y)\, \mathrm{d}y + \eta(L \times L). 
	\]
	It follows that
	\begin{align*}
		d \beta_W(H,H^c)
		& =
		\eta(X \times X) + \eta(X^c \times X^c) 
		\\
		& \leq
		2 \left( \pi(X \cap L) + \pi(X^c \cap L^c) \right) 
		\int_{H} \gamma(y)\, \mathrm{d}y
		+ \eta(L \times L) + \eta(L^c \times L^c)
		\\
		& \leq
		\left( \varepsilon + 2 \sqrt{\frac{(\varepsilon + \delta)(1 + \varepsilon)}{2}} \right)
		d \beta_W(H,H^c)
		\\
		& \hspace{0.46cm} + 
		\eta(L \times L) + \eta(R \times R) + \eta((L \cup R)^c \times R) 
		+ \eta(L^c \times (L \cup R)^c)
		\\
		& \leq
		\left( \varepsilon + 2 \sqrt{\frac{(\varepsilon + \delta)(1 + \varepsilon)}{2}} \right)
		d \beta_W(H,H^c)
		\\
		& \hspace{0.46cm} +	
		\eta(L \times L) + \eta(R \times R) + 2d \pi((L \cup R)^c) 
		\\
		& \leq 
		\left( \varepsilon + 2 \sqrt{\frac{(\varepsilon + \delta)(1 + \varepsilon)}{2}} \right)
		d \beta_W(H,H^c) + d \beta_W(L,R) + 2d \varepsilon
		\\
		& <
		\left( \varepsilon + 2 \sqrt{\frac{(\varepsilon + \delta)(1 + \varepsilon)}{2}} \right)
		d \beta_W(H,H^c) + 2d \varepsilon + d (\beta_W + \epsilon), 
	\end{align*}	
	and using the inequality $\epsilon \leq \varepsilon$, we get
	\[
		\left( 
		1 - \varepsilon - 2 \sqrt{\frac{(\varepsilon + \delta)(1 + \varepsilon)}{2}} 
		\right) 
		\beta_W(H,H^c) 
		< \beta_W + 3 \varepsilon,  
	\]	
	that is,	
	\[
		\left( 1 - \varepsilon 
		- 2 \sqrt{\frac{\varepsilon(5 - \varepsilon)(1 + \varepsilon)}{2(1 - \varepsilon)}} 
		\right) \beta_W(H,H^c) 
		< \beta_W + 3 \varepsilon.
	\]	
	This implies 
	\begin{align*}
		\beta_W(H,H^c) 
		& < 
		(\beta_W + 3 \varepsilon) \left( 1 - \varepsilon 
		- 2 \sqrt{\frac{\varepsilon(5 - \varepsilon)(1 + \varepsilon)}{2(1 - \varepsilon)}} \right)^{-1}
		\\
		& \leq
		(\beta_W + 3 \varepsilon) \left( 1 + 2 \left( \varepsilon 
		+ 2 \sqrt{\frac{\varepsilon(5 - \varepsilon)(1 + \varepsilon)}{2(1 - \varepsilon)}} \right) \right)
		\tag{using $\varepsilon \leq \frac{1}{50}$, and 
		$\frac{1}{1-x} \leq 1 + 2x$ for $0 \leq x \leq \frac{1}{2}$}
		\\
		& <
		\beta_W + \left( 3 \varepsilon + \frac{\varepsilon}{4} \right)
		\left( 2 \varepsilon + 4 \sqrt{\frac{\varepsilon(5 - \varepsilon)(1 + \varepsilon)}
		{2(1 - \varepsilon)}} \right) + 3 \varepsilon 
		\tag{using $\beta_W < \frac{\varepsilon}{4}$},	
	\end{align*}
	and hence, we get $\beta_W(H,H^c) - \beta_W < \mathfrak{d}_W$, which is a contradiction.

	Now choose $g \in G$ such that $\pi(L \cap Lg)$ lies in the interval
	$(\delta \pi(L), (1 - \delta) \pi(L))$.
	Then, we have
	\[
		\pi((L \cap Lg) \cup (R \cap Rg)) \geq \pi(L \cap Lg) > \delta \pi(L)
		\geq \frac{\delta (1 - \varepsilon)}{2},
	\]
	and the fact that
	$L \setminus Lg \subseteq (L \setminus (Rg)^c) \cup ((Rg)^c \setminus Lg)$ implies
	\begin{align*}
		\pi((L \cap Rg) \cup (R \cap Lg)) 
		& \geq 
		\pi(L \setminus (Rg)^c)
		\\
		& \geq
		\pi(L \setminus Lg) - \pi((Rg)^c \setminus Lg)
		\\
		& =
		\pi(L) - \pi(L \cap Lg) - \pi((L \cup R)^c)
		\\
		& \geq
		\pi(L) - (1 - \delta) \pi(L) - \pi((L \cup R)^c)
		\\
		& \geq
		\frac{\delta (1 - \varepsilon)}{2}  - \varepsilon.
	\end{align*}
	This yields
	\[
		\min\{\pi((L \cap Lg) \cup (R \cap Rg)), \pi((L \cap Rg) \cup (R \cap Lg))\}
		\geq \frac{\delta (1 - \varepsilon)}{2}  - \varepsilon = \varepsilon.
	\]
	Combining this with \eqref{ineq:minpih-bdd} gives 
	$\varepsilon h_W \leq 4 \beta_W(L,R)$.
	Considering all the above cases, we conclude that 
	$h_W \leq \frac{4}{\varepsilon} \beta_W(L,R)$.

	Note that if the group $G$ does not admit an index two measurable subgroup,
	then taking $\varepsilon = \frac{1}{50}$ yields 
	the inequality $h_W \leq 200 \beta_W$.
	Combining this with the dual Cheeger--Buser inequality \eqref{ineq:dualCheeger}, 
	we obtain
	\[
		2 - \lambda_W^{\max} \geq \frac{h_W^2}{2 (200)^2}.
		\qedhere
	\]	
\end{proof}

Now, we give a proof of \cref{thm:connected&bipartite-index2-avoiding-gamma}
using \cref{thm:mainthm}.

\begin{proof}[Proof of \cref{thm:connected&bipartite-index2-avoiding-gamma}]
	Let $W = C(G,\gamma)$ be a connected bipartite Cayley graphon.
	Then we have $\beta_W = 0$, and since $W$ is $d$-regular with $d > 0$, 
	using the arguments similar to those in the proof of 
	\cite[Proposition 7.7]{Abhishek-Mahan24}, it follows that $h_W > 0$.
	Using \cref{thm:mainthm}, we get a sequence $(H_n)_n$ 
	of index two measurable subgroups of $G$ such that 
	the sequence $(\beta_W(H_n,H_n^c))_n$ converges to zero.
	Suppose that the sequence $(H_n)$ has infinitely many distinct terms.
	Choose a subsequence $(H_{n_k})_k$ of $(H_n)$ such that 
	for every positive integer $k$, the subgroups $H_{n_k}$ and $H_{n_{k+1}}$ are distinct.
	By abuse of notation, we denote $H_{n_k}$ by $H_k$ for any $k$.

	Let $k$ be an arbitrary positive integer. 
	Set $\mathcal{H}_k = (H_k \cap H_{k+1}) \cup (H_k^c \cap H_{k+1}^c)$.
	Note that $\mathcal{H}_k$ is a measurable subset of $G$ with $0 < \pi(\mathcal{H}_k) < 1$.
	Indeed, using translation invariance of $\pi$, we have 
	\[
		\pi(H_k \cap H_{k+1}) = \pi(H_k \cap H_{k+1}^c) 
		= \pi(H_k^c \cap H_{k+1}) = \pi(H_k^c \cap H_{k+1}^c),
	\]	 
	and hence, $\pi(\mathcal{H}_k) = \frac{1}{2}$.
	Observe that
	\begin{align*}
		\eta(\mathcal{H}_k \times \mathcal{H}_k^c) 
		& =
		\eta((H_k \cap H_{k+1}) \times (H_k \cap H_{k+1}^c))
		+ \eta((H_k \cap H_{k+1}) \times (H_k^c \cap H_{k+1})) 
		\\
		& \hspace{0.44cm}
		+ \eta((H_k^c \cap H_{k+1}^c) \times (H_k \cap H_{k+1}^c))
		+ \eta((H_k^c \cap H_{k+1}^c) \times (H_k^c \cap H_{k+1}))
		\\
		& \leq
		\eta(H_k \times H_k) + \eta(H_{k+1} \times H_{k+1})
		+ \eta(H_{k+1}^c \times H_{k+1}^c) + \eta(H_k^c \times H_k^c)
		\\
		& =
		d(\beta_W(H_k,H_k^c) + \beta_W(H_{k+1},H_{k+1}^c)).
	\end{align*}
	It follows that the sequence $(\eta(\mathcal{H}_k \times \mathcal{H}_k^c))_k$ 
	converges to zero, and thus, $h_W = 0$, which is a contradiction.
	This proves that the sequence $(H_n)$ has only finitely many distinct terms.
	Consequently, we get an index two measurable subgroup $H$ of $G$ with 
	\[
		\beta_W(H,H^c) = \frac{1}{d} \int_{H} \gamma(y)\, \mathrm{d}y = 0,
	\]
	which implies $\pi(\supp(\gamma) \cap H) = 0$. 
\end{proof}

\section{A contrast between graphs and graphons} \label{sec:example}

Let $\Gamma$ be a connected non-bipartite Cayley graph with degree $d$. 
Let $h_{\Gamma}$ and $\beta_{\Gamma}$ denote its Cheeger constant and 
the bipartiteness ratio respectively.
Saha \cite[Theorem 1.1]{Saha-DualCheeger} proved that 
$h_{\Gamma} \leq 90d \beta_{\Gamma}$.
In this section, we prove that no such bounds exist 
for connected non-bipartite Cayley graphons.

\begin{thm} \label{thm:unbounded-ratios}
	The ratios of the Cheeger constants and the bipartiteness ratios 
	of connected non-bipartite Cayley graphons cannot be bounded from above
	by a function of their degrees.
\end{thm}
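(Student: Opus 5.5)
We will exhibit a family of connected non-bipartite Cayley graphons with a fixed degree $d$ whose ratios $h_W/\beta_W$ are unbounded. The natural place to look is the failure of the hypothesis $\mathfrak{d}_W>0$ in \cref{thm:mainthm}: the graphon should be ``almost bipartite'' along an index two subgroup, while its Cheeger constant stays bounded away from zero.

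\textbf{Construction.} Take $G = \mathbb{Z}/2\mathbb{Z} \times \mathbb{T}$, where $\mathbb{T}=\mathbb{R}/\mathbb{Z}$, with Haar measure $\pi = \frac12(\text{counting})\times \text{Lebesgue}$. Let $H = \{0\}\times\mathbb{T}$, so that $H^c=\{1\}\times\mathbb{T}$. Fix $d\in(0,1/2]$ and a small parameter $t>0$. Define
\[
\gamma = 1_{\{1\}\times I} + 1_{\{0\}\times J_t},
\]
where $I\subseteq\mathbb{T}$ is a symmetric interval about $0$ of length $2d$ and $J_t$ is a symmetric interval about $0$ of length $2t$. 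Both pieces are symmetric under inversion. Put $\gamma_t=\gamma$; its degree is $d+t$. To keep the degree exactly $d$, shrink $I$ to length $2(d-t)$. Write $W_t=C(G,\gamma_t)$.

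\textbf{Upper bound on $\beta$.} Take $L=H$ and $R=H^c$. The computation in the proof of \cref{thm:mainthm} gives
\[
\beta_{W_t}(H,H^c)=\frac1d\int_H\gamma_t = \frac{t}{d},
\]
which tends to $0$ as $t\to 0$. Non-bipartiteness for $t>0$ follows from the conclusion of \cref{thm:connected&bipartite-index2-avoiding-gamma}. The only index two subgroups of $G$ are the preimages of index two subgroups of $\mathbb{Z}/2\times\mathbb{T}$. Since $\mathbb{T}$ is divisible, every such subgroup contains $\{0\}\times\mathbb{T}=H$, so $H$ is the only one. Because $\pi(\supp\gamma_t\cap H)=t>0$, the graphon $W_t$ is not bipartite, provided it is connected.

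\textbf{Lower bound on $h$ (main obstacle).} We must show $h_{W_t}\ge c(d)>0$ uniformly as $t\to0$. The obstacle is that the limit graphon $W_0$ is bipartite, yet we need a Cheeger bound that does not degrade. The plan is to use the comparison $\gamma_t\ge 1_{\{1\}\times I'}$, where $I'$ is a fixed interval of length $2(d-t_0)$ for all $t\le t_0$. It then suffices to prove $h_{W'}>0$ for $W'=C(G,1_{\{1\}\times I'})$.

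Note that $W'$ is a bipartite double cover of the circle graphon $C(\mathbb{T},1_{I'})$. A set $A=(\{0\}\times A_0)\cup(\{1\}\times A_1)$ has boundary
\[
\tfrac14\bigl(e(A_0,A_1^c)+e(A_1,A_0^c)\bigr).
\]
We bound this below in terms of the circle Cheeger constant, which is positive because $C(\mathbb{T},1_{I'})$ is connected and, by the result cited from \cite{Abhishek-Mahan24}, regular connected graphons have positive Cheeger constant.

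The delicate step is excluding sets with $A_0\approx A_1^c$, i.e.\ near-bipartitions. For such a set,
\[
e(A_0,A_1^c)\approx e(A_0,A_0)
\]
must be small. But in a circle graphon with interval connection set, a set of measure about $1/2$ cannot have small internal edge density, since $A_0\cap(A_0+s)$ is large for small shifts $s$. This yields $e(A_0,A_0)\ge c'\pi(A_0)$.

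Combining the two estimates gives $h_{W_t}\ge c(d)$, and therefore
\[
\frac{h_{W_t}}{\beta_{W_t}}\ge \frac{c(d)\,d}{t}\to\infty.
\]
Connectedness of $W_t$ also follows from $h_{W_t}>0$. This family also shows that the positivity of $\mathfrak{d}_W$ cannot be dropped in \cref{thm:mainthm}, since $\mathfrak{d}_{W_t}=0$.
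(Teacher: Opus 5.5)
Your construction, the bound $\beta_{W_t}\le\beta_{W_t}(H,H^c)=t/d$, the non-bipartiteness argument (divisibility of $\mathbb{T}$ makes $H$ the only index two subgroup), and the comparison $h_{W_t}\ge\frac{d-t_0}{d}\,h_{W'}$ are correct. This is a genuinely different route from the paper's. The gap is in the step you flag as the main obstacle: $h_{W'}>0$ is never actually established.

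Your reason for the near-bipartition case gives no uniform bound. Write $|\cdot|$ for Lebesgue measure on $\mathbb{T}$. For a fixed $A_0$, the function $s\mapsto|A_0\cap(A_0+s)|$ is continuous and equals $|A_0|$ at $s=0$. However, the range of shifts on which it stays large depends on $A_0$. If $A_0$ is the union of $N$ equally spaced arcs of length $\frac{1}{2N}$, then $|A_0|=\frac12$ while $A_0\cap(A_0+\frac{1}{2N})=\emptyset$, and $\frac{1}{2N}\in I'$ for large $N$. What is true is that the \emph{average} $\int_{I'}|A_0\cap(A_0+s)|\,ds$ is bounded below uniformly for $|A_0|$ near $\frac12$, but this needs a real argument. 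For instance, with $I'=(-\ell,\ell)$, the nontrivial normalized eigenvalues $\frac{\sin(2\pi k\ell)}{2\pi k\ell}$ of $U=C(\mathbb{T},1_{I'})$ are all at least $-0.22$, so $\beta_U>0$ by \eqref{ineq:dualCheeger}; a triangle-counting argument also works. In addition, the dichotomy ``$A_0\approx A_1$ versus $A_0\approx A_1^c$'' is never made quantitative, and sets in between are not handled. So ``combining the two estimates'' is not yet a proof. (Likewise, your closing claim $\mathfrak{d}_{W_t}=0$ needs $\beta_{W_t}=t/d$ exactly, not just $\le$. It does hold for small $t$, by computing the spectrum via characters of $G$ and using \eqref{ineq:dualCheeger}, but it is not needed for the theorem.)

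The gap is easy to close, and the detour through $U$ is unnecessary. Since $W'$ is regular of degree $d-t_0>0$, it suffices to show that $W'$ is connected and then apply to $W'$ itself the fact you already cite. Suppose $\eta_{W'}(A\times A^c)=0$, so $e(A_0,A_1^c)=e(A_1,A_0^c)=0$. Since $x\mapsto|A_1^c\cap(x+I')|$ is continuous, $A_1$ has full measure in $x+I'$ for every density point $x$ of $A_0$. Hence $A_0$ has full measure in $x+I'+I'$, and iterating shows that $A$ is null or conull.

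If you want to keep the reduction to $U$, set $P=A_0\cap A_1$, $L=A_0\setminus A_1$, $R=A_1\setminus A_0$. Expanding gives $e(A_0,A_1^c)+e(A_1,A_0^c)\ge\frac12\bigl(2e(L,L)+2e(R,R)+e(L\cup R,(L\cup R)^c)+e(P,P^c)\bigr)$. Since $\pi(A)=|P|+\frac12|L\cup R|\le\frac12$, this yields $h_{W'}\ge\frac14\min\{h_U,4\beta_U\}$.

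By contrast, the paper takes $\gamma=a1_H+b1_{H^c}$ with $a\to0$. Because $\gamma$ is constant on each coset, non-atomicity of $\pi$ reduces $h$ and $\beta$ to explicit finite-dimensional optimizations. This gives exactly $h=\frac12$ and $\beta=\frac{a}{a+b}$ for each fixed degree $d\in(0,1)$, so no robust Cheeger estimate is needed.
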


Let $G$ be a topological group as in \cref{sec:intro}.
Assume that the Haar probability measure $\pi$ on $G$ is non-atomic.
This ensures that given any measurable subset of $G$ with $\pi(A) > 0$ 
and a real number $t$ satisfying $0 \leq t \leq \pi(A)$, 
there exists a measurable subset $B$ of $A$ such that $\pi(B) = t$
(for instance, see \cite[Lemma A.1]{Janson-Graphons-CutNorm}).
Suppose that $G$ admits an index two measurable subgroup $H$.
Let $a, b$ be positive real numbers not exceeding $1$.
Define a function $\gamma \colon G \to [0,1]$ by
\[
	\gamma(x) = 
	\begin{cases}
		a & \text{if } x \in H,
		\\
		b & \text{if } x \in H^c.
	\end{cases}
\]
Note that $\gamma$ is a symmetric measurable function. 
Denote the Cayley graphon $C(G,\gamma)$ by $W_{a,b}$.
Note that $W_{a,b}$ is a $d$-regular graphon with $d = \frac{1}{2} (a + b)$.
We call $d$ the \emph{degree} of the graphon $W_{a,b}$.

\begin{lemma} \label{lem:compute-h}
	The graphon $W_{a,b}$ is connected with 
	\[
		h_{W_{a,b}} = 
		\begin{cases}
			\frac{b}{a + b} & \text{if } a \geq b,
			\\
			\frac{1}{2} & \text{if } a < b.
		\end{cases}
	\]
\end{lemma}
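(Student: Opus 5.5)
We will compute $\eta(A \times A^c)$ explicitly for an arbitrary measurable set $A$, using only how $A$ splits between the two cosets $H$ and $H^c$. The reason this works is that the kernel $W_{a,b}(x,y) = \gamma(xy^{-1})$ is constant on each of the four blocks $H \times H$, $H^c \times H^c$, $H \times H^c$, $H^c \times H$. Since $H$ has index two, $xy^{-1} \in H$ exactly when $x,y$ lie in the same coset. So $W_{a,b}$ equals $a$ on $(H\times H) \cup (H^c \times H^c)$ and equals $b$ on the other two blocks.

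\emph{Connectedness.} Since $\gamma \geq \min\{a,b\} > 0$ everywhere, we have $\eta(A \times A^c) \geq \min\{a,b\}\, \pi(A)\pi(A^c) > 0$ whenever $0 < \pi(A) < 1$. So $W_{a,b}$ is connected.

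\emph{Reduction to a quadratic problem.} We use the second expression for $h_W$ from \cref{sec:prelims}. Since $W_{a,b}$ is $d$-regular with $d = \frac{a+b}{2}$, we have $\nu(A) = d\,\pi(A)$, so we may restrict to sets with $0 < \pi(A) \leq \frac12$. Write $s_1 = \pi(A \cap H)$ and $s_2 = \pi(A \cap H^c)$, so $s_1, s_2 \in [0,\tfrac12]$ and $t = s_1 + s_2 \in (0, \tfrac12]$. Integrating the block-constant kernel gives
\[
\eta(A\times A^c) = a\bigl[s_1(\tfrac12 - s_1) + s_2(\tfrac12 - s_2)\bigr] + b\bigl[s_1(\tfrac12 - s_2) + s_2(\tfrac12 - s_1)\bigr] = d\,t - Q(s_1,s_2),
\]
where $Q(s_1,s_2) = a(s_1^2+s_2^2) + 2b s_1 s_2$. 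Therefore
\[
\frac{\eta(A\times A^c)}{\nu(A)} = 1 - \frac{Q(s_1,s_2)}{d\,t}.
\]
Computing $h_{W_{a,b}}$ thus amounts to maximizing $Q$ subject to $s_1 + s_2 = t$, and then optimizing over $t$.

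\emph{Optimization.} Rewrite $Q = a t^2 - 2(a-b) s_1 s_2$.
\begin{itemize}
\item If $a \geq b$, then $Q$ is maximized when $s_1 s_2 = 0$, giving $Q \leq a t^2$. The ratio is then at least $1 - \frac{2at}{a+b} \geq 1 - \frac{a}{a+b} = \frac{b}{a+b}$. Equality is attained by $A = H$ (here $t = \frac12$, $s_2 = 0$).
\item If $a < b$, then $Q$ is maximized at $s_1 = s_2 = t/2$, giving $Q \leq \frac{(a+b)t^2}{2} = d\,t^2$. The ratio is then at least $1 - t \geq \frac12$. Equality requires a set $A$ with $\pi(A\cap H) = \pi(A \cap H^c) = \frac14$. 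Such sets exist by the non-atomicity assumption: choose subsets of measure $\frac14$ inside $H$ and inside $H^c$ via \cite[Lemma A.1]{Janson-Graphons-CutNorm}.
\end{itemize}

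\emph{Main obstacle.} No step is deep. The step that needs care is the bookkeeping in the block computation of $\eta(A \times A^c)$, in particular checking the measures of $A^c \cap H$ and $A^c \cap H^c$, which are $\frac12 - s_1$ and $\frac12 - s_2$. The other point to get right is that $s_1, s_2 \leq \frac12$ imposes no extra constraint on the optimization, since $t \leq \frac12$ already.
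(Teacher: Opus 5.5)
Your proof is correct and follows essentially the paper's route: you decompose $\eta(A\times A^c)$ along the cosets of $H$, reduce to maximizing a quadratic in $\pi(A\cap H)$ and $\pi(A)$ over $0<\pi(A)\le\frac12$, and invoke non-atomicity to attain the bound $\frac12$ when $a<b$. Your connectedness argument via $\gamma\ge\min\{a,b\}>0$ is a slightly quicker variant of the paper's, which uses only $b>0$ and case analysis on which intersections have positive measure.
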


\begin{proof}
	Let $A$ be any measurable subset of $G$ with $0 < \pi(A) < 1$. Then, we have
	\begin{align*}
		\eta(A \times A^c) 
		& = 
		\eta((A \cap H) \times (A^c \cap H)) + \eta((A \cap H) \times (A^c \cap H^c))
		\\
		& \quad +
		\eta((A \cap H^c) \times (A^c \cap H)) + \eta((A \cap H^c) \times (A^c \cap H^c))
		\\
		& =
		a \left( \pi(A \cap H) \pi(A^c \cap H) + \pi(A \cap H^c) \pi(A^c \cap H^c) \right) 
		\\
		& \quad +
		b \left( \pi(A \cap H) \pi(A^c \cap H^c) + \pi(A \cap H^c) \pi(A^c \cap H) \right)
		\\ 
		& \geq
		b \left( \pi(A \cap H) \pi(A^c \cap H^c) + \pi(A \cap H^c) \pi(A^c \cap H) \right).	
	\end{align*}
	Since $\pi(A) > 0$, at least one of $\pi(A \cap H)$ and $\pi(A \cap H^c)$ is positive.
	Also, at least one of $\pi(A^c \cap H)$ and $\pi(A^c \cap H^c)$ is positive, as $\pi(A) < 1$.
	Similarly, as $\pi(H), \pi(H^c) > 0$, at least one of $\pi(A \cap H)$ and $\pi(A^c \cap H)$,
	and at least one of $\pi(A \cap H^c)$ and $\pi(A^c \cap H^c)$ are positive.
	This, along with the assumption that $b$ is positive, 
	implies that $\eta(A \times A^c)$ is positive.
	Thus, the graphon $W_{a,b}$ is connected. 
	
	Further, note that
	\begin{align} \label{eq:eta-A-A^c}
		& \hspace{0.55cm}
		\eta(A \times A^c) \nonumber
		\\
		& =
		a \left( \pi(A \cap H) \left( \frac{1}{2} - \pi(A \cap H) \right)  
		+ (\pi(A) - \pi(A \cap H)) \left( \frac{1}{2} - \pi(A) + \pi(A \cap H) \right) \right)
		\nonumber
		\\
		& \quad +
		b \left( \pi(A \cap H) \left( \frac{1}{2} - \pi(A) + \pi(A \cap H) \right) 
		+ (\pi(A) - \pi(A \cap H)) \left( \frac{1}{2} - \pi(A \cap H) \right) \right)
		\nonumber
		\\
		& =
		a \left( \frac{1}{2} \pi(A) - \pi(A)^2 + 2 \pi(A) \pi(A \cap H) 
		- 2 \pi(A \cap H)^2 \right)
		\nonumber
		\\
		& \quad + 
		b \left( \frac{1}{2} \pi(A) + 2 \pi(A \cap H)^2 - 2 \pi(A) \pi(A \cap H) \right)
		\nonumber
		\\
		& =
		\frac{1}{2} (a + b) \pi(A) 
		- 2(a - b) \left( \pi(A \cap H)^2 - \pi(A) \pi(A \cap H) \right) - a \pi(A)^2.
	\end{align}
	Hence, we obtain
	
	\begin{align*}
		h_{W_{a,b}} 
		& = 
		\inf_{\substack{A \in \mathcal{B} \\ 0 < \pi(A) \leq \frac{1}{2}}} 
		\frac{\eta(A \times A^c)}{d \pi(A)}
		\\
		& =
		\inf_{\substack{A \in \mathcal{B} \\ 0 < \pi(A) \leq \frac{1}{2}}} 
		\frac{\frac{1}{2} (a + b) \pi(A) 
		- 2(a - b) \left( \pi(A \cap H)^2 - \pi(A) \pi(A \cap H) \right) - a \pi(A)^2}
		{\frac{1}{2} (a + b) \pi(A)}
		\\
		& =
		\inf_{\substack{0 \leq x \leq y \\ 0 < y \leq \frac{1}{2}}}	
		\frac{\frac{1}{2} (a + b)y - 2 (a - b)(x^2 - xy) - a y^2}{\frac{1}{2} (a + b)y}
		\tag{using non-atomicity of $\pi$}
		\\
		& =
		1 - 2 \sup_{\substack{0 \leq x \leq y \\ 0 < y \leq \frac{1}{2}}}	
		\frac{2(a - b)(x^2 - xy) + a y^2}{(a + b)y}.
	\end{align*}
	Suppose that $a \geq b$.
	Then, for any real numbers $x, y$ with $0 \leq x \leq y$ and $0 < y \leq \frac{1}{2}$,
	the inequality 
	\[
		\frac{2(a - b)(x^2 - xy) + a y^2}{(a + b)y} \leq \frac{a}{a + b} y 
		\leq \frac{a}{2(a + b)}
	\]
	holds. 
	Moreover, for $x = 0$ and $y = \frac{1}{2}$, the equality holds in the above inequality.
	So, in this case, we have
	\[
		h_{W_{a,b}} = 1 - \frac{a}{a + b} = \frac{b}{a + b}.
	\]
	Now assume that $a < b$.
	For any real numbers $x, y$ with $0 \leq x \leq y$ and $0 < y \leq \frac{1}{2}$,
	we get
	\begin{align*}
		\frac{2(a - b)(x^2 - xy) + a y^2}{(a + b)y}
		& = 
		\frac{y}{a + b} \left( a + 2(a - b) \left( \frac{x^2}{y^2} - \frac{x}{y} \right) \right)
		\\
		& \leq
		\frac{y}{a + b} \left( a - \frac{a - b}{2} \right)		
		\\
		& =
		\frac{y}{2}
		\\
		& \leq
		\frac{1}{4}.
	\end{align*}
	Note that the equality holds in the above inequalities 
	for $x = \frac{1}{4}$ and $y = \frac{1}{2}$. 
	This shows that $h_{W_{a,b}} = \frac{1}{2}$ when $a < b$.
\end{proof}

\begin{lemma} \label{lem:compute-beta}
	The graphon $W_{a,b}$ is non-bipartite with
	\[
		\beta_{W_{a,b}} = 
		\begin{cases}
			\frac{1}{2} & \text{if } a \geq b,
			\\
			\frac{a}{a + b} & \text{if } a < b.
		\end{cases}
	\]
	In particular, we have $\beta_{W_{a,b}} = \beta_{W_{a,b}}(H,H^c)$ if $a < b$.
\end{lemma}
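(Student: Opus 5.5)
Non-bipartiteness follows from $\beta_{W_{a,b}}>0$, so it suffices to compute $\beta_{W_{a,b}}$ and show it is positive. The plan is to write $\beta_{W_{a,b}}(L,R)$ explicitly in terms of the four quantities
\[
\ell_1=\pi(L\cap H),\quad \ell_2=\pi(L\cap H^c),\quad r_1=\pi(R\cap H),\quad r_2=\pi(R\cap H^c),
\]
exactly as in the proof of \cref{lem:compute-h}, and then minimize over the resulting polytope, using non-atomicity of $\pi$ to realize all admissible values. For any measurable $A,B$ we have $\eta(A\times B)=a(\pi(A\cap H)\pi(B\cap H)+\pi(A\cap H^c)\pi(B\cap H^c))+b(\pi(A\cap H)\pi(B\cap H^c)+\pi(A\cap H^c)\pi(B\cap H))$. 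Put $U=L\cup R$, $u_1=\ell_1+r_1$, $u_2=\ell_2+r_2$, so that $\pi(U^c\cap H)=\frac12-u_1$ and $\pi(U^c\cap H^c)=\frac12-u_2$. Since $\nu(U)=d(u_1+u_2)$ with $d=\frac{a+b}{2}$, the numerator becomes
\[
N=2a(\ell_1^2+\ell_2^2+r_1^2+r_2^2)+4b(\ell_1\ell_2+r_1r_2)+a\bigl(u_1(\tfrac12-u_1)+u_2(\tfrac12-u_2)\bigr)+b\bigl(u_1(\tfrac12-u_2)+u_2(\tfrac12-u_1)\bigr),
\]
and $\beta_{W_{a,b}}(L,R)=N/\bigl((a+b)(u_1+u_2)\bigr)$.

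\textbf{Upper bounds.} Taking $L=H$, $R=H^c$ gives $N=2a\cdot\frac14\cdot 2=a$ and $u_1+u_2=1$, so $\beta\le \frac{a}{a+b}$. Taking $R=\emptyset$ and $L=G$ gives $N=2\cdot\frac{a+b}{4}\cdot$ (appropriate terms) $=\frac{a+b}{2}$, hence $\beta\le\frac12$. Thus $\beta\le\min\{\frac12,\frac{a}{a+b}\}$, which equals the claimed value in both cases.

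\textbf{Lower bound.} This is the main obstacle. We must show $N\ge m(a+b)(u_1+u_2)$ with $m=\min\{\frac12,\frac{a}{a+b}\}$. The idea is to rewrite $N$ as
\[
\tfrac12(a+b)(u_1+u_2)+[\text{quadratic form}],
\]
mirroring \eqref{eq:eta-A-A^c}. Indeed, the linear part of the cut terms is $\frac{a+b}{2}(u_1+u_2)$. The remaining quadratic part is
\[
Q=2a\sum x^2+4b(\ell_1\ell_2+r_1r_2)-a(u_1^2+u_2^2)-2bu_1u_2 .
\]
Expanding, with $u_1=\ell_1+r_1$ and $u_2=\ell_2+r_2$,
\[
Q=a\bigl((\ell_1-r_1)^2+(\ell_2-r_2)^2\bigr)+2b(\ell_1\ell_2+r_1r_2-\ell_1r_2-r_1\ell_2)=a(p^2+q^2)+2bpq,
\]
where $p=\ell_1-r_1$ and $q=\ell_2-r_2$.

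If $a\ge b$, then $a(p^2+q^2)+2bpq\ge (a-b)(p^2+q^2)\ge0$, so $N\ge\frac{a+b}{2}(u_1+u_2)$ and $\beta\ge\frac12$.

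If $a<b$, write $Q=a(p+q)^2-2(b-a)\cdot(-pq)$. Then $-pq\le \frac{(|p|+|q|)^2}{4}$. Since $|p|\le u_1\le\frac12$ and $|q|\le u_2\le\frac12$, we get $Q\ge-\frac{b-a}{2}(|p|+|q|)^2\ge -\frac{b-a}{2}(u_1+u_2)\cdot\min\{u_1+u_2,1\}\ge-\frac{b-a}{2}(u_1+u_2)$. Hence
\[
N\ge(u_1+u_2)\Bigl(\tfrac{a+b}{2}-\tfrac{b-a}{2}\Bigr)=a(u_1+u_2),
\]
giving $\beta\ge\frac{a}{a+b}$. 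The step I expect to need the most care is this last inequality: one has to check that $(|p|+|q|)^2\le u_1+u_2$ under the constraints $u_i\le\frac12$, using $(|p|+|q|)^2\le(u_1+u_2)^2\le u_1+u_2$ since $u_1+u_2\le1$.

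Equality holds at $L=H$, $R=H^c$ when $a<b$, which proves the final claim. Finally, positivity of $a$ and $b$ gives $\beta_{W_{a,b}}>0$, so $W_{a,b}$ is non-bipartite, since bipartiteness would give $\beta_{W_{a,b}}=0$.
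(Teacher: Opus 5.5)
\textbf{Verdict: the lower bound and the case $a<b$ are correct, but the upper bound $\beta_{W_{a,b}}\le\frac12$ for $a>b$ rests on a witness that actually gives $\beta=1$.}

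\textbf{What is correct.} Your lower bound is essentially the paper's argument. Since $p+q=\pi(L)-\pi(R)$ and $p-q=2\bigl(\pi(L\cap H)-\pi(R\cap H)\bigr)-(\pi(L)-\pi(R))$, your form $Q=a(p^2+q^2)+2bpq=\frac{a+b}{2}(p+q)^2+\frac{a-b}{2}(p-q)^2$ is exactly the paper's $u(x,y,z,w)$. Your bound $-pq\le\frac14(|p|+|q|)^2\le\frac14(u_1+u_2)^2$ plays the role of the paper's bound $\bigl(z-w-\frac12(x-y)\bigr)^2\le\bigl(\frac{x+y}{2}\bigr)^2$. The following are also fine:
\begin{enumerate}[(i)]
\item the witness $L=H$, $R=H^c$ for $a<b$;
\item the ``in particular'' claim;
\item deducing non-bipartiteness from $\beta_{W_{a,b}}>0$, which needs only the lower bound.
\end{enumerate}

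\textbf{The gap.} When $a>b$ you need $\beta_{W_{a,b}}\le\frac12$, and the witness $(H,H^c)$ does not help, since there $\frac{a}{a+b}>\frac12$. Your witness $L=G$, $R=\emptyset$ gives $N=2\eta(G\times G)=2d=a+b$ and $u_1+u_2=1$, so $\beta_{W_{a,b}}(G,\emptyset)=1$. In your own notation, $p=q=\frac12$ and $Q=\frac{a+b}{2}\neq 0$; the phrase ``(appropriate terms)'' hides this.

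\textbf{The fix.} Use your identity $N=\frac{a+b}{2}(u_1+u_2)+Q$. Any disjoint $L,R$ with $\pi(L\cup R)>0$ and $p=q=0$, i.e.\ $\pi(L\cap H)=\pi(R\cap H)$ and $\pi(L\cap H^c)=\pi(R\cap H^c)$, gives $\beta_{W_{a,b}}(L,R)=\frac12$. For instance, split $H^c$ into two measurable sets of measure $\frac14$ each; this is the paper's choice $(x,y,z,w)=(\frac14,\frac14,0,0)$.

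\textbf{Non-atomicity is essential here.} This step is precisely where non-atomicity of $\pi$ is used, and it cannot be avoided. For $G=\mathbb{Z}/2\mathbb{Z}$, $H=\{0\}$ and $a>b$, checking all pairs $(L,R)$ gives $\beta_{W_{a,b}}=\frac{a}{a+b}>\frac12$. So no witness that ignores the non-atomicity hypothesis can work.
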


\begin{proof}
	Let $A$ be a measurable subset of $G$ with $\pi(A) > 0$.
	Then, we get
	\begin{align*}
		\eta(A \times A) 
		& =
		\eta((A \cap H) \times (A \cap H)) + \eta((A \cap H) \times (A \cap H^c))
		\\
		& \quad +
		\eta((A \cap H^c) \times (A \cap H)) + \eta((A \cap H^c) \times (A \cap H^c))
		\\
		& =
		a \left( \pi(A \cap H)^2 + \pi(A \cap H^c)^2 \right) + 2b \pi(A \cap H) \pi(A \cap H^c)
		\\
		& \geq
		a \left( \pi(A \cap H)^2 + \pi(A \cap H^c)^2 \right),	
	\end{align*}
	which is positive, since $a$ is positive and 
	at least one of $\pi(A \cap H)$ and $\pi(A \cap H^c)$ is positive.
	It follows that the graphon $W_{a,b}$ is not bipartite.
	
	We will now compute the bipartiteness ratio $\beta_{W_{a,b}}$ of $W_{a,b}$.
	Let $L, R$ be disjoint measurable subsets of $G$ with $\pi(L \cup R) > 0$.
	Combining the above with \cref{eq:eta-A-A^c} gives 
	\begin{align*}
		& \hspace{0.56cm}
		2 \eta(L \times L) + 2 \eta(R \times R) + \eta((L \cup R) \times (L \cup R)^c)
		\\
		& =
		2a \left( \pi(L \cap H)^2 + \pi(L \cap H^c)^2 
		+ \pi(R \cap H)^2 + \pi(R \cap H^c)^2 \right) 
		\\
		& \quad +
		4b \left( \pi(L \cap H) \pi(L \cap H^c) + \pi(R \cap H) \pi(R \cap H^c) \right)	
		+ \frac{1}{2} (a + b) \pi(L \cup R) 
		\\
		& \quad -
		2(a - b) \left( \pi((L \cup R) \cap H)^2 - \pi(L \cup R) \pi((L \cup R) \cap H) \right) 
		- a \pi(L \cup R)^2.	
	\end{align*}
	Also note that
	\[
		2 \eta((L \cup R) \times G) = 2d \pi(L \cup R) = (a + b) \pi(L \cup R).
	\]
	
	Consider the set 
	\[
		S \coloneq \left\{ (x,y,z,w) \in \mathbb{R}^4 \biggm| 
		\begin{array}{l}
			x \geq 0, y \geq 0, 0 < x + y \leq 1,
			\\ 
			0 \leq z \leq x, 0 \leq w \leq y, 
			x + y - \frac{1}{2} \leq z + w \leq \frac{1}{2} 
		\end{array}
		\right\},
	\]
	and for any element $(x,y,z,w)$ of $S$, let 
	\begin{align*}
		u(x,y,z,w) 
		& = 
		2a \left( z^2 + (x - z)^2 + w^2 + (y - w)^2 \right) 
		+ 4b \left( z(x - z) + w (y - w) \right)
		\\
		& \quad -
		2 (a - b) \left( (z + w)^2 - (x + y)(z + w) \right) - a (x + y)^2.
	\end{align*}
	Then, observe that
	\begin{align*}
		\beta_{W_{a,b}} 
		& = 
		\inf_{\substack{L,R \in \mathcal{B} \\ 
		\pi(L \cup R) > 0 \\ L \cap R = \emptyset}} \beta_{W_{a,b}}(L,R) 
		\\
		& = 
		\inf_{(x,y,z,w) \in S} \frac{\frac{1}{2} (a + b)(x + y) + u(x,y,z,w)}{(a + b)(x + y)}
		\\
		& =
		\frac{1}{2} + \frac{1}{a + b} \inf_{(x,y,z,w) \in S} \frac{u(x,y,z,w)}{x + y}. 
	\end{align*}
	Let $(x,y,z,w)$ be any element of $S$. We have
	\begin{align*}
		\frac{u(x,y,z,w)}{x + y} 
		& =
		\frac{a(x - y)^2 + 2(a - b) \left( (z - w)^2 - (x - y)(z - w) \right)}{x + y}
		\\
		& =
		\frac{a(x - y)^2 - \frac{1}{2} (a - b)(x - y)^2
		+ 2(a - b) \left( z - w - \frac{1}{2} (x - y) \right)^2}{x + y}	
		\\
		& =
		\frac{\frac{1}{2} (a + b) (x - y)^2 
		+ 2(a - b) \left( z - w - \frac{1}{2} (x - y) \right)^2}{x + y}.	
	\end{align*}
	If $a \geq b$, then $\frac{u(x,y,z,w)}{x + y} \geq 0$. 
	Moreover, for $(x,y,z,w) = \left( \frac{1}{4},\frac{1}{4},0,0 \right) \in S$, 
	we have $\frac{u(x,y,z,w)}{x + y} = 0$. 
	This shows that $\beta_{W_{a,b}} = \frac{1}{2}$ whenever $a \geq b$.
	Now assume that $a < b$. 
	Let $(x,y,z,w)$ be an arbitrary element of $S$. 
	If $z - w \geq \frac{1}{2} (x - y)$, then we get
	\[
		\left( z - w - \frac{1}{2} (x - y) \right)^2 
		\leq \left( x - \frac{1}{2} (x - y) \right)^2
		= \left( \frac{x + y}{2} \right)^2.	
	\]
	If $z - w < \frac{1}{2} (x - y)$, then we have
	\[
		\left( z - w - \frac{1}{2} (x - y) \right)^2 
		\leq \left( -y - \frac{1}{2} (x - y) \right)^2
		= \left( \frac{x + y}{2} \right)^2.	
	\]
	Hence, it follows that
	\[
		\frac{u(x,y,z,w)}{x + y} 
		\geq \frac{2(a - b) \left( \frac{x + y}{2} \right)^2}{x + y}
		= \frac{1}{2} (a - b)(x + y) \geq \frac{1}{2} (a - b). 
	\]
	Further, for $(x,y,z,w) = \left( \frac{1}{2},\frac{1}{2},\frac{1}{2},0 \right) \in S$, 
	note that $\frac{u(x,y,z,w)}{x + y} = \frac{1}{2} (a - b)$. 
	Thus, we conclude that
	\[
		\beta_{W_{a,b}} = \frac{1}{2} + \frac{a - b}{2(a + b)} = \frac{a}{a + b},
	\]
	in the case when $a < b$. 
	
	Also, we have
	\[
		\beta_{W_{a,b}}(H,H^c) = \frac{\eta(H \times H) + \eta(H^c \times H^c)}{d}
		= \frac{a}{a + b},
	\]
	and hence, $\beta_{W_{a,b}} = \beta_{W_{a,b}}(H,H^c)$.
\end{proof}

Note that if $a < b$, then $\mathfrak{d}_{W_{a,b}} = 0$.
Using this family of Cayley graphons, we now prove \cref{thm:unbounded-ratios}.

\begin{proof}[Proof of \cref{thm:unbounded-ratios}]
	Fix a positive real number $d$ with $d \leq 1$.
	Let $a$ and $b$ be real numbers satisfying $0 < a < b \leq 1$, and 
	$\frac{1}{2} (a + b) = d$, and $W_{a,b}$ denote the graphon 
	as defined in the beginning of this section.
	Then it follows from \cref{lem:compute-h,lem:compute-beta} that 
	\[
		\frac{h_{W_{a,b}}}{\beta_{W_{a,b}}} = \frac{a + b}{2a}
		= \frac{d}{a},
	\]
	and thus, the set
	\[
		\left\{ \frac{h_{W_{p,q}}}{\beta_{W_{p,q}}} \biggm| 0 < p < q \leq 1,
		\frac{p + q}{2} = d \right\}
	\]
	is unbounded.
	Since $d$ is the degree of the graphon $W_{p,q}$ for any real numbers $p,q$
	satisfying $0 < p < q \leq 1$ and $\frac{1}{2} (p + q) = d$, the result follows.
\end{proof}

\begin{remark}
	Using \cref{lem:compute-h,lem:compute-beta}, it also follows that
	the ratios of the bipartiteness ratios and the Cheeger constants
	of connected non-bipartite Cayley graphons cannot be bounded from above
	by a function of their degrees. 
\end{remark}

\section*{Acknowledgements}

The author is grateful to Jyoti Prakash Saha for suggesting the problem, 
and for his guidance and support through several helpful discussions about this work.
The author acknowledges the fellowship from the University Grants Commission 
with reference number 231620172128.


\end{document}